\documentclass[12pt]{article}
\usepackage{amsfonts,amssymb,amsmath,titling,titlesec,cancel,amsthm,color, hyperref}
\usepackage[numbers,square]{natbib}


\setlength{\textwidth}{6.5in}
\setlength{\textheight}{9in}
\setlength{\oddsidemargin}{0in}
\setlength{\evensidemargin}{0in}
\setlength{\headsep}{0in}
\setlength{\headheight}{0in}
\setlength{\topmargin}{0in}
\setlength{\parskip}{6pt}

\setlength{\droptitle}{-0.95in}
\titleformat{\section}{\large\bfseries}{\thesection.}{1em}{}

\theoremstyle{plain}
\newtheorem{theorem}{Theorem}[section]
\newtheorem{lemma}[theorem]{Lemma}

\newtheorem{corollary}[theorem]{Corollary}
\newtheorem{definition}[theorem]{Definition}

\newtheorem{conjecture}[theorem]{Conjecture}

\title{Resolving a Conjecture on Degree of Regularity of Linear Homogeneous Equations}
\date{\today} 
\author{Noah Golowich\thanks{MIT-PRIMES, Department of Mathematics, Massachusetts Institute of Technology.}}

\begin{document}
\maketitle

\begin{abstract}
A linear equation is {\it $r$-regular}, if, for every $r$-coloring of the positive integers, there exist positive integers of the same color which satisfy the equation. In 2005, Fox and Radoi\' ci\v c conjectured that the equation $x_1 + 2x_2 + \cdots + 2^{n-2}x_{n-1} - 2^{n-1}x_n = 0$, for any $n \geq 2$, has a degree of regularity of $n-1$, which would verify a conjecture of Rado from 1933. Rado's conjecture has since been verified with a different family of equations. In this paper, we show that Fox and Radoi\' ci\v c's family of equations indeed have a degree of regularity of $n-1$. We also provide a few extensions of this result.
\end{abstract}

\section{Introduction}
\label{sec:intro}
In 1927, van der Waerden \cite{waerden_beweis_1927} proved his seminal theorem stating that, for any finite coloring of the positive integers, there always exists a monochromatic arithmetic progression of arbitrary length. Subsequently, in 1933, Rado \cite{rado_studien_1933} expanded on this theorem, finding a necessary and sufficient condition for the partition regularity of systems of linear homogeneous equations. In the case of a single linear homogeneous equation of the form
\begin{equation}
\label{eq:initialeq}
a_1x_1 + a_2x_2 +a_3x_3 + \cdots + a_nx_n = 0 \quad : \quad a_i \ne 0, \ a_i \in \mathbb{Z},
\end{equation}
given any finite coloring of the integers, Rado proved that there exist positive monochromatic integers $(x_1, x_2, \ldots, x_n)$ that satisfy the equation if and only if a nonempty subset of $\{a_1, a_2, \ldots, a_n\}$ sums to 0. An equation for which there exists a monochromatic solution given any finite coloring is defined as {\it regular}.

Not all linear homogeneous equations are regular, however. Those that are not regular are classified as follows: given a positive integer $r$, a linear homogeneous equation is called {\it $r$-regular} if, for every coloring of the positive integers with $r$ colors, there always exists a monochromatic solution $x_1, x_2, \ldots, x_n$ to the equation. The {\it degree of regularity} of a linear homogeneous equation is defined as the largest positive integer $r$ such that the equation is $r$-regular.

Rado conjectured in 1933 \cite{rado_studien_1933} that for every positive integer $n$, there exists a linear homogeneous equation with degree of regularity equal to $n$. This conjecture was open for a long time until it was proven in 2009 by Alexeev and Tsimerman \cite{alexeev_equations_2010}. Specifically, they proved that for each $n$ the equation
\begin{equation}
\label{eq:alexeev}
 \left(1 - \displaystyle\sum\limits_{i = 1}^{n-1} \frac{2^i}{2^i - 1}\right)x_1 + \displaystyle\sum\limits_{i = 1}^{n-1} \frac{2^i}{2^i-1}x_{i+1}  = 0
\end{equation}
is $(n-1)$-regular but not $n$-regular. To show that these equations are $(n-1)$-regular, they noted that there must be an $i$, $0 < i < n$, such that $x$ and $2^i x$ are the same color. Otherwise, the $n$ integers $x, 2x, 4x, \ldots 2^{n-1}x$ would all be  different colors, which is impossible in an $(n-1)$-coloring. Alexeev and Tsimerman then noted that the following is a monochromatic solution to $(\ref{eq:alexeev})$:
\begin{eqnarray}
x_1 = x_2 = \cdots = x_{i} = x_{i+2} = \cdots = x_n &=& 2^ix\nonumber\\
x_{i+1} &=& x\nonumber.
\end{eqnarray}

Before Alexeev and Tsimerman's proof, it was conjectured in 2005 by Fox and Radoi\' ci\v c \cite{fox_axiom_2005} that the simpler family of equations
\begin{equation}
\label{eq:altsintro}
x_1 + 2x_2 + \cdots + 2^{n-2}x_{n-1} - 2^{n-1}x_n = 0
\end{equation}
is $(n-1)$-regular. This was shown by Alexeev, Fox, and Graham for $n \leq 7$ \cite{alexeev_minimal_2007}. We prove this conjecture:

\begin{theorem}
\label{thm:main}
Given any positive integer $n \geq 2$, the equation 
\begin{equation}
\label{eq:2ieqn}
x_1 + 2x_2 + \cdots + 2^{n-2}x_{n-1} - 2^{n-1}x_n = 0
\end{equation}
is $(n-1)$-regular.
\end{theorem}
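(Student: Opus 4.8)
The plan is to show that for any $(n-1)$-coloring of the positive integers, equation (\ref{eq:2ieqn}) has a monochromatic solution. The key structural observation, analogous to the Alexeev--Tsimerman argument, is that among the integers $x, 2x, 4x, \ldots, 2^{n-1}x$ (which number $n$ in total) some two must receive the same color in any $(n-1)$-coloring by pigeonhole. So I would begin by fixing a color class and looking for a value $x$ together with indices $0 \le i < j \le n-1$ such that $2^i x$ and $2^j x$ share a color, and then ask whether such a coincidence can be leveraged into a genuine monochromatic solution of the equation. The difficulty, and the reason this is harder than the Alexeev--Tsimerman case, is that equation (\ref{eq:2ieqn}) has the rigid coefficient pattern $1, 2, 4, \ldots, 2^{n-2}, -2^{n-1}$, so I cannot freely choose which variables take the value $x$ versus $2^i x$; the coefficients must balance so that $\sum_{k=1}^{n-1} 2^{k-1} x_k = 2^{n-1} x_n$ holds with every $x_k$ and $x_n$ drawn from a single color class.

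My approach would be to search for a solution in which the variables take only two distinct values, say $a$ and $b$, where $a$ and $b$ are guaranteed monochromatic by the pigeonhole coincidence. Writing each $x_k \in \{a, b\}$, the equation becomes a constraint of the form $\left(\sum_{k \in S} 2^{k-1}\right) a + \left(\sum_{k \notin S} 2^{k-1}\right) b = 2^{n-1} x_n$ for some subset $S$ of coordinates assigned the value $a$; exploiting that the partial sums $\sum 2^{k-1}$ range over all residues modulo powers of two, I expect to be able to solve for the required ratio $b/a$ and match it to a color coincidence. The cleanest route is likely an inductive or recursive construction on $n$: I would try to reduce a monochromatic solution for the $n$-variable equation to a color coincidence of the form ``$x$ and $2^i x$ share a color'' for a single $i$, and then build the explicit solution by distributing the binary digits of the coefficient $2^{n-1}$ across the left-hand side. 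Because $2^{n-1} = 2^0 + 2^1 + \cdots + 2^{n-2} + 1$ fails to telescope exactly, I anticipate needing to introduce an auxiliary scaling and to track carries in base two carefully.

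The main obstacle I foresee is that a single color coincidence $2^i x \sim 2^j x$ may not by itself force a solution whose variables all lie in one color class, unlike the Alexeev--Tsimerman equation whose flexible rational coefficients were tailored precisely so that any such coincidence works. I would therefore expect to need a stronger combinatorial input: either a Gallai/Hales--Jewett-type argument to produce a richer monochromatic configuration, or an iterated pigeonhole that guarantees a coincidence of a specific multiplicative form (for instance, that $x$ and $2x$ are the same color for some $x$, or that a whole geometric progression collapses). Establishing that the rigid coefficients of (\ref{eq:2ieqn}) admit a monochromatic solution from whatever coincidence the pigeonhole guarantees is the crux, and I suspect the proof hinges on a clever choice of the base value $x$ (likely of the form $2^m$ or a product tuned to the coloring) so that the binary expansion of the coefficients aligns with the available monochromatic pairs. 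To certify degree of regularity \emph{exactly} equal to $n-1$ rather than merely at least $n-1$, I would also exhibit an explicit $n$-coloring with no monochromatic solution; this direction is typically the easier half and follows from a base-$2$ or valuation-based coloring that separates $x$ from $2^{n-1}x$.
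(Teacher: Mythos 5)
Your proposal correctly identifies the starting point --- by pigeonhole, in any $(n-1)$-coloring some $x$ and $2^jx$ with $1 \le j \le n-1$ share a color --- and correctly diagnoses that this coincidence alone cannot be distributed over the rigid coefficients $1, 2, \ldots, 2^{n-2}, -2^{n-1}$ the way it can over the Alexeev--Tsimerman coefficients. But the proposal stops at the diagnosis: there is no actual construction, only a list of things you ``would try'' (two-valued assignments, induction on $n$, tracking binary carries). The two-valued route in particular does not close: if every $x_k \in \{b, 2^jb\}$, the equation forces $\bigl(\sum_{k \in S} 2^{k-1}\bigr) + 2^j\bigl(\sum_{k \notin S, k\le n-1} 2^{k-1}\bigr) = 2^{n-1}\cdot(\text{$1$ or $2^j$})$, and there is no subset $S$ making this identity hold for general $j$; some slack term is unavoidable. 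So the crux you flag --- ``establishing that the rigid coefficients admit a monochromatic solution from whatever coincidence the pigeonhole guarantees'' --- is exactly the part left unproved.

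The missing idea, which the paper supplies, is a van der Waerden--based enrichment of the pigeonhole coincidence (you gesture at this with ``Gallai/Hales--Jewett-type argument,'' which is the right instinct, but you never execute it). Concretely: the family $S$ of pairs $(2^jb, b)$ is homogeneous and $(n-1)$-regular, and a Rado-style lemma (proved via van der Waerden plus a compactness/product-coloring trick, then an induction on the number of colors) upgrades this to: there exist $(2^jb, b) \in S$ and $d > 0$ such that the whole progressions $2^jb + \lambda_1 d$ and $b + \lambda_2 d$ for $|\lambda_i| \le M$ \emph{and} the single element $2^{n-1}d$ all share one color. With that in hand the solution is immediate: set every variable except $x_{n-j}$ and $x_n$ equal to $2^{n-1}d$, set $x_{n-j} = 2^jb + \lambda_1 d$ and $x_n = b + \lambda_2 d$, and the equation reduces to the linear condition $\lambda_1 - 2^j\lambda_2 = 2^{n-1} + 2^j - 2^{n-1+j}$, solved by $\lambda_2 = 2^{n-1}$, $\lambda_1 = 2^{n-1} + 2^j$ with $M = 2^n$. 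The progressions absorb the slack that defeats the pure two-value attempt. Finally, note that the theorem as stated only asserts $(n-1)$-regularity; the complementary non-$n$-regularity you mention at the end is a separate result of Fox and Radoi\v ci\'c and is not something you need to prove here.
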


Fox and Radoi\' ci\v c showed that these equations are not $n$-regular \cite{fox_axiom_2005}. Our proof therefore provides an alternate proof of Rado's conjecture.

To prove that $(\ref{eq:2ieqn})$ is $(n-1)$-regular, we use a similar strategy to Alexeev and Tsimerman. Specifically, we use the fact that in an $(n-1)$-coloring of the positive integers, one can find $0 < i < n$ such that $x$ and $2^ix$ are the same color. We then introduce some lemmas which use van der Waerden's theorem to prove that one can find monochromatic progressions centered at $x$ and $2^ix$. Finally, we find a monochromatic parametrization of $x_1, \ldots, x_n$ which includes the progressions centered at $x$ and $2^ix$.

The organization of this paper is as follows: In Section $\ref{sec:lemmas}$ we prove some important lemmas. In Section $\ref{sec:mainthm}$, we prove our main result. In Section $\ref{sec:cors}$, we prove some extensions of Theorem $\ref{thm:main}$.

\section{Lemmas}
\label{sec:lemmas}
The below lemmas are very similar to lemmas in Graham, Rothschild, and Spencer \cite{graham_ramsey_1990} used in the proof of Rado's Theorem. The main difference is that the hypotheses are relaxed to a family of sets being $r$-regular instead of regular, and as a result, the lemmas apply to an $r$-coloring of the positive integers, and not any finite coloring. Similar versions of these lemmas have also been used to strengthen the proof of Rado's conjecture \cite{alexeev_equations_2010}\cite{gandhi_degree_2013}. A few definitions are needed beforehand.

\begin{definition}
A family $S$ of subsets of $\mathbb{N}$ is defined to be {\bf homogeneous}, if for any set $A \in S$, the set $A'$, defined as $\{ka : a \in A\}$ also belongs to S. $S$ is {\bf $r$-regular}, if for any $r$-coloring of the positive integers, there exists a monochromatic set in $S$.
\end{definition}

\begin{lemma} 
\label{lem:manyprogs}
Let $S$ be a homogeneous family of subsets of $\mathbb{N}$ which is $r$-regular, where $r$ is a positive integer. If the positive integers  are colored with $r$ colors, then there exist $B \in S$, $d, M > 0$, so that all
\begin{equation}
b + \lambda d \quad : \quad b \in B, |\lambda| \leq M\nonumber
\end{equation}
have the same color.
\end{lemma}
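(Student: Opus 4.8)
The plan is to play two tools of unequal strength against each other. Van der Waerden's theorem \cite{waerden_beweis_1927} holds for colorings with any finite number of colors, whereas the hypothesis that $S$ is $r$-regular may be invoked only for colorings that use at most $r$ colors; homogeneity of $S$ is what links the two, since it allows me to dilate a member freely and thereby transport a progression found at one scale into a common difference across a whole member. The argument follows the pattern of the analogous lemma in \cite{graham_ramsey_1990}, the one subtlety being that every coloring to which I apply the $r$-regularity of $S$ must be engineered to use exactly $r$ colors.

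A useful first reduction, which isolates where homogeneity does its work, is the following. To produce a common difference $d$ it suffices to find a positive integer $d$ (a ``scale'') and a member $A \in S$ such that, under the dilated coloring $x \mapsto c(dx)$, the neighborhood $\{a + \lambda : a \in A,\ |\lambda| \le M\}$ is monochromatic: for then $B := dA$ lies in $S$ by homogeneity, and the identity $da + \lambda d = d(a+\lambda)$ shows that $\{b + \lambda d : b \in B,\ |\lambda| \le M\}$ is exactly the $d$-dilate of that monochromatic neighborhood, hence monochromatic under $c$. With this reduction in hand, I fix the target radius $M$, set $t = 2M+1$, and let $W$ be a van der Waerden number that forces a monochromatic $t$-term progression in any $r$-coloring of a long enough block. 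For each scale $a$ I use van der Waerden's theorem on the induced $r$-coloring $i \mapsto c(ai)$ to locate such a progression among the multiples of $a$, and I define an auxiliary coloring $\psi(a)$ to be only the color of that progression. Since $\psi$ merely reports a value of $c$, it uses at most $r$ colors, so the $r$-regularity of $S$ applies and yields a member on which $\psi$ is constant; by construction each of its elements carries a monochromatic progression of this common color.

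The hard part is to convert these progressions, one per element and a priori with differing common differences, into a single member together with one common difference $d$, all while respecting that $S$ is only $r$-regular. The naive fix would record each progression's common difference inside $\psi$ as well, but this inflates the auxiliary coloring past $r$ colors and the $r$-regularity of $S$ can no longer be applied; this is precisely the pressure created by weakening ``regular'' to ``$r$-regular,'' and I expect it to be the main obstacle. My plan is to absorb the difference into the scale rather than into the coloring: using the reduction above together with homogeneity, I arrange the progressions so that after dilation they share one common difference, and I invoke van der Waerden's theorem---with more than $r$ colors if necessary, which is permitted since it holds for all finite colorings---to pin down a good scale $d$, keeping the single $r$-color application in reserve for the regularity of $S$. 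Once $B$ and $d$ are fixed this way, checking that every $b + \lambda d$ shares the color is immediate from the construction.
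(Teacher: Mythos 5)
You have correctly located the crux of the lemma---reconciling van der Waerden (valid for any finite number of colors) with the $r$-regularity of $S$ (valid only for $r$-colorings)---but your proposal stops exactly where the proof has to happen. After your $\psi$-step you hold, for each element $a$ of a $\psi$-monochromatic member $A$, a monochromatic progression among the multiples of $a$; these progressions have unrelated starting points and unrelated common differences, and their centers $as_a$ do not form a member of $S$ (nor a dilate of one), so they are not yet anchored where the lemma needs them. Your stated fix---``absorb the difference into the scale'' and ``arrange the progressions so that after dilation they share one common difference''---is a description of the goal, not a construction: you never say what the single scale is, why a member of $S$ survives the rearrangement, or how the dilations are made compatible. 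Moreover, your opening reduction asks for $B = dA$ with the common difference equal to the dilation factor itself; that is strictly stronger than what the lemma requires, and the standard construction does not deliver it (the member one obtains is a dilate of $A$ by the \emph{center} of a progression, not by its difference).

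The missing ideas are the following, and they change the order of the steps. First, use compactness to get a finite $R$ so that every $r$-coloring of $[1,R]$ contains a member of $S$ inside $[1,R]$. Second---and this is the key device you do not have---define a single \emph{product} coloring $\omega'(\alpha) = \bigl(\omega(\alpha), \omega(2\alpha), \ldots, \omega(R\alpha)\bigr)$ with $r^{R}$ colors, and apply van der Waerden once to $\omega'$ to get one long progression $a + \lambda d$; monochromaticity in $\omega'$ means that all the dilates $P, 2P, \ldots, RP$ of this one progression are simultaneously monochromatic in $\omega$, which is what replaces your ``one progression per scale.'' Third, apply the $r$-regularity of $S$ to the genuine $r$-coloring $i \mapsto \omega(ai)$ of $[1,R]$ to get $\{b_1,\ldots,b_n\} \in S$ and hence $B = \{ab_1,\ldots,ab_n\} \in S$ by homogeneity. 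Finally, the differing differences are synchronized not by re-scaling but by taking $d' = d\cdot\operatorname{lcm}(b_1,\ldots,b_n)$, so that $ab_i + \lambda d' = b_i\bigl(a + \lambda d\,(y/b_i)\bigr)$ lands inside the $b_i$-dilate of the original progression, provided the progression was taken long enough (length about $M R^{n-1}$) to absorb the factor $y/b_i$. Without the product coloring and the lcm step, the ``hard part'' you flag remains unproved.
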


\begin{proof}
By the Compactness principle \cite{graham_ramsey_1990}, we can find a constant $R$ such that for any $r$-coloring of $[1, R]$, there exists a monochromatic set in $S$ which is a subset of $[1, R]$.  Now, let $\omega$ be an $r$-coloring of $\mathbb{N}$.  We define an $r^{R}$ coloring on $\mathbb{N}$, namely $\omega'$, by 
\[\omega'(\alpha) = \omega'(\beta) \ \ \ \ \text{iff} \ \ \ \ \omega(\alpha i) = \omega(\beta i) \mbox{ for } 1 \le i \le R. \]
We define $K = CR^{n-1}$. Within our coloring $\omega'$, we find a monochromatic arithmetic progression $P$ of length $2K + 1$ of the form 
\begin{equation}
\label{eq:othermonoseq}
a + \lambda d \quad : \quad |\lambda| \le K.
\end{equation}
Note that for the original coloring $\omega$, this means that $P$, $2\cdot P$,\ldots, $R\cdot P$ are all monochromatic.  Since $S$ is homogeneous, the $r$-coloring $\omega$ of $\{a, 2a, \ldots, Ra\}$ yields a monochromatic set in $S$, namely $B$. 

Now we let $n$ be the size of $B$ and let $ab_1, \ldots,a b_n$ be the elements of $B$. Moreover, we let $y$ be the least common multiple of $b_1, \ldots, b_n$ and $d' = d y$. Therefore,
\[ab_i + \lambda d'  = ab_i + \lambda d y = b_i\left(a + \lambda d\left(\frac{y}{b_i}\right)\right).\]
Notice that $\lambda \le C$ and $\frac{y}{b_i} \le b_1\ldots b_{i-1} b_{i+1} \ldots b_n \le R^{n-1}$, so $\lambda y/ b_i \le K$.  

Thus, $a + \lambda d y/b_i$ belongs to $P$, which implies that 
\[\omega'\left(a + \lambda d \left(\frac{y}{b_i}\right)\right) = \omega'(a),\]
and thus, by our definition of $\omega'$,
\begin{equation}
\omega(ab_i + \lambda d') = \omega(ab_i).\nonumber
\end{equation}
Since $\omega(ab_i)$ is constant for $1 \leq i \leq n$, $\omega(ab_i + \lambda d')$ is constant for $1 \leq i \leq n$ and $|\lambda| \leq C$.
\end{proof}

\begin{corollary}
\label{cor:progd}
Let $S$ be a homogeneous family of subsets of $\mathbb{N}$ which is $r$-regular, where $r$ is a positive integer. Let $q$ and $M$ be positive integers. If the positive integers are colored with $r$ colors, then there exist $B \in S$, $d > 0$ such that all of
\begin{equation}
\label{eq:firstcon}
b + \lambda d \quad : \quad b \in B, |\lambda| \leq M
\end{equation}
and
\begin{equation}
\label{eq:secondcon}
qd
\end{equation}
have the same color.
\end{corollary}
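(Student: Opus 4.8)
The plan is to mirror the proof of Lemma~\ref{lem:manyprogs} almost verbatim, the only new feature being that we must also force the single element $qd$ into the common color. I recognize this as the homogeneous-family analogue of Brauer's theorem (a monochromatic progression together with a prescribed multiple of its common difference), so I expect to combine the machinery of Lemma~\ref{lem:manyprogs} with a Brauer-type enhancement of the van der Waerden step.

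Concretely, I would first invoke the Compactness principle exactly as before to obtain a bound $R$ such that every $r$-coloring of $[1,R]$ contains a monochromatic member of $S$ inside $[1,R]$; in particular the least common multiple of any such member is at most $Y:=R^{R}$. Next I would refine $\omega$ to a finite coloring $\omega'$ that records $\omega(i\alpha)$ for all $1\le i\le N$, now taking the dilation bound $N$ large enough to absorb the extra factor, say $N\ge qY$. Applying van der Waerden to $\omega'$ I would extract a long arithmetic progression $a+\lambda d$ (with $|\lambda|\le K$) that is $\omega'$-monochromatic and, crucially, strengthen this step in the style of Brauer so that the common difference $d$ itself lies in the same $\omega'$-class as the progression. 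From here the argument of Lemma~\ref{lem:manyprogs} runs unchanged: coloring the dilates $\{a,2a,\dots,Ra\}$ by $\omega$ and using homogeneity together with the regularity of $S$ yields a monochromatic $B=\{ab_1,\dots,ab_n\}\in S$ of some color $c$; setting $d'=d\cdot\mathrm{lcm}(b_1,\dots,b_n)$ and taking $K\ge MY$ makes the whole grid $ab_i+\lambda d'$ (with $|\lambda|\le M$) have color $c$. It remains to read off $qd'$: since $qd'$ is a multiple of $d$ and $d$ was placed in the $\omega'$-class, the dilation-tracking of $\omega'$ (with the enlarged bound $N\ge qY$) is designed to pull the color of $qd'$ back to $c$.

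The hard part is exactly this last step. The difficulty is that $qd'$ is a bare multiple of the common difference rather than a dilate of a progression element, so a single van der Waerden progression gives no control over its color --- indeed a pure $2$-adic valuation coloring shows that consecutive multiples of a difference need not repeat a color. Overcoming this is what forces the two refinements above: adjoining the difference to the monochromatic configuration Brauer-style, and enlarging the auxiliary coloring to track dilations up to $q\cdot\mathrm{lcm}$. The genuinely delicate point I expect to fight with is reconciling the least-common-multiple rescaling --- which is what lets one common difference $d'$ serve every point of the multi-element set $B$ simultaneously --- with the spare factor $q$, so that after all the synchronization $qd'$ still lands in the single color $c$ rather than in some unrelated color determined by where the monochromatic set of $S$ happened to fall.
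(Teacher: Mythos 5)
Your plan diverges from the paper's and, at the crucial point, it does not close. The step you flag as ``the hard part'' is in fact a genuine gap, not a delicacy to be smoothed over. Even granting a Brauer-type strengthening that places the common difference $d$ in the same $\omega'$-class as the progression, what that buys you is only $\omega(di)=\omega(ai)$ for $i\le N$, hence $\omega(qd')=\omega(q y d)=\omega(a\cdot qy)$. But the target color $c$ is the color of the monochromatic member $B=\{ab_1,\dots,ab_n\}$ of $S$ found among the dilates $\{a,2a,\dots,Ra\}$, and nothing ties $c$ to $\omega(a\cdot qy)$: the set $B$ need not contain $a$, its color need not be $\omega(a)$, and $qy$ need not be one of the $b_i$. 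So after all the synchronization, $qd'$ lands in a color determined by $\omega(a\cdot qy)$, which is exactly the ``unrelated color'' you worry about, and the dilation-tracking coloring $\omega'$ gives you no lever to move it. No enlargement of $N$ fixes this, because the obstruction is not quantitative.

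The paper avoids this entirely by a different mechanism: induction on the number of colors (the color-focusing argument from the proof of Rado's theorem). One applies Lemma~\ref{lem:manyprogs} with the enlarged range $|\lambda|\le TM$ to get a red configuration with difference $d'$; if some multiple $uqd'$ with $u\le T$ is also red, take $d=ud'$ and finish; otherwise the $T$ integers $qd',2qd',\dots,Tqd'$ avoid red, so they are colored with only $p-1$ colors, and homogeneity of $S$ lets you recurse on that induced coloring. This is the same device that underlies Brauer's theorem itself, which is a sign that importing Brauer as a black box into the van der Waerden step cannot substitute for running the induction at the level of the full configuration. If you want to salvage your approach, you should restructure it as this induction on $r$ rather than trying to force $qd'$ into the color class via the auxiliary coloring.
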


\begin{proof}
We use induction on the number of colors. Let $p$ be a positive integer less than or equal to $r$. Assume that there exists $T = T(p-1, M, c)$ so that if $[1, T]$ is $(p-1)$-colored, there exist $B \in S$ and $d$ satisfying $(\ref{eq:firstcon})$ and $(\ref{eq:secondcon})$. Now consider any $p$-coloring of the positive integers. By Lemma $\ref{lem:manyprogs}$, and since $p \leq r$, there exist $B \in S$ and $d' > 0$ such that all
\begin{equation}
b + \lambda d' \quad : \quad b \in B, |\lambda| \leq TM\nonumber
\end{equation}
are the same color, suppose red. If there exists an integer $u \leq T$ where $uqd'$ is also colored red then we let $d = ud'$, which would satisfy $(\ref{eq:firstcon})$ and $(\ref{eq:secondcon})$. Otherwise, the integers $qd', 2qd', \ldots, Tqd'$ are each colored with one of $p-1$ colors. Therefore, $(\ref{eq:firstcon})$ and $(\ref{eq:secondcon})$ are satisfied by induction.
\end{proof}

\section{Proof of Main Theorem}
\label{sec:mainthm}
In this section we prove Theorem $\ref{thm:main}$.

\begin{proof}
Consider any $(n-1)$-coloring $c: \mathbb{N} \rightarrow \{1, \ldots, n-1\}$ of the positive integers. By the pigeonhole principle, among the set of integers $\{2^0, 2^1, \ldots, 2^{n-1}\}$, there must be 2 integers of the same color. Therefore, among the first $2^{n-1}$ positive integers, there are integers $j$, $1 \leq j \leq n-1$, and $x$, such that $c(x) = c(2^jx)$. We now define the set $S$, a family of ordered pairs by:
\begin{equation}
S = \left\{(a, b) \in \mathbb{N}^2 \quad : \quad a = 2^jb \mbox{ where } 1 \leq j \leq n-1\right\}.\nonumber
\end{equation}
Notice that $S$ is  homogeneous, Moreover, by the argument above, $S$ is $(n-1)$-regular on $\mathbb{N}$; specifically, a monochromatic set in $S$ can be found on any $(n-1)$-coloring of the first $2^{n-1}$ integers. We now let $M = 2^n$.

By Corollary $\ref{cor:progd}$, for any $(n-1)$-coloring of $\mathbb{N}$ there exist $(a, b) \in S$, $d > 0$, where all
\begin{eqnarray}
&a + l_1d& \quad : \quad |l_1| \leq M \nonumber\\
&b + l_2d& \quad : \quad |l_2| \leq M\nonumber\\
&2^{n-1}d&\nonumber
\end{eqnarray}
 are the same color. By the definition of $S$, there exists a positive integer $j$ such that $a = 2^j b$. We now form the following parametrization of $x_1, \ldots, x_n$:
\begin{eqnarray}
x_1 &=& 2^{n-1}d\nonumber\\
&\vdots& \nonumber\\
x_{n-j} &=& 2^jb + \lambda_1d\nonumber\\
\label{eq:xiparam}
&\vdots&\\
x_{n-1} &=& 2^{n-1}d\nonumber\\
x_n &=& b + \lambda_2d\nonumber,
\end{eqnarray}
where $\lambda_1$ and $\lambda_2$ are integers in the range $-M \leq \lambda_1, \lambda_2 \leq M$, to be determined. Clearly, each of the values assigned to $x_1, \ldots, x_n$ is the same color, regardless of the values of $\lambda_1, \lambda_2$. In order for the above values of $x_1, \ldots, x_n$ to satisfy $(\ref{eq:2ieqn})$, we must have:
\begin{equation}
2^{n-1}d(2^{n-1} - 1 - 2^{n - 1 - j}) + 2^{n - 1 - j}(2^jb + \lambda_1d) - 2^{n-1}(b + \lambda_2d) = 0\nonumber,
\end{equation}
which implies that
\begin{equation}
2^{2n-2} - 2^{n-1} - 2^{2n - 2 - j} + 2^{n - 1 - j}\lambda_1 - 2^{n-1}\lambda_2 = 0.\nonumber
\end{equation}
Rearranging terms and dividing out by $2^{n -1 - j}$, we see that
\begin{equation}
\lambda_1 - 2^j \lambda_2 = 2^{n-1} + 2^j - 2^{n - 1 + j}.\nonumber
\end{equation}
We may now choose $\lambda_2 = 2^{n-1}$ and $\lambda_1 = 2^{n-1} + 2^{j}$, both of which are less than or equal to $M$. These values of $\lambda_1, \lambda_2$ produce $x_1, \ldots, x_n$ according to $(\ref{eq:xiparam})$ which are both monochromatic and satisfy $(\ref{eq:2ieqn})$.
\end{proof}

\section{Extensions}
\label{sec:cors}

We can derive several extensions of our results. The first states that adding additional terms to a linear homogeneous equation can not lower its degree of regularity.

\begin{theorem}
Assume the equation $a_1x_1 + \cdots + a_nx_n = 0$ is $r$-regular. Then, for any positive integer $k$, and rationals $b_1, \ldots, b_k$, the equation
\begin{equation}
\label{eq:aandb}
a_1x_1 + \cdots + a_nx_n + b_1x_{n+1} + \cdots + b_kx_{n+k} = 0
\end{equation}
is also $r$-regular.
\end{theorem}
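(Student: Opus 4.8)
The plan is to reduce to the case of a single added term and then apply Corollary \ref{cor:progd} to the family of solutions of the original equation. First I would clear denominators, multiplying (\ref{eq:aandb}) by a common denominator of $b_1, \ldots, b_k$; this rescales the coefficients $a_i$ but leaves the solution set of $a_1 x_1 + \cdots + a_n x_n = 0$ unchanged, so that equation remains $r$-regular and I may assume all $a_i, b_j$ are integers. Next I would collapse the new variables by setting $x_{n+1} = \cdots = x_{n+k} = v$, turning (\ref{eq:aandb}) into $a_1 x_1 + \cdots + a_n x_n + B v = 0$ with $B = b_1 + \cdots + b_k$. A monochromatic solution $(x_1, \ldots, x_n, v)$ of this equation immediately yields the monochromatic solution $(x_1, \ldots, x_n, v, \ldots, v)$ of (\ref{eq:aandb}), so it suffices to handle a single extra term.

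If $B = 0$, then any monochromatic solution $(x_1, \ldots, x_n)$ of the original equation (which exists by $r$-regularity) extends by taking $v = x_1$, since the new term contributes $Bv = 0$. The substance is therefore the case $B \neq 0$. Here I would let $S$ be the family of value-sets $\{x_1, \ldots, x_n\}$ of positive integer solutions to $a_1 x_1 + \cdots + a_n x_n = 0$. This family is homogeneous, since scaling a solution by $k$ gives another solution, and it is $r$-regular, since $r$-regularity of the equation means every $r$-coloring admits a solution whose coordinates all share a color. Thus Corollary \ref{cor:progd} applies to $S$.

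Concretely, I would invoke Corollary \ref{cor:progd} with $q = |a_1|$ and $M = |B|$ (choosing a coordinate $x_1$ whose coefficient $a_1$ is nonzero), obtaining a solution set $A \in S$ and a $d > 0$ such that every $a + \lambda d$ with $a \in A$, $|\lambda| \leq |B|$, together with $|a_1| d$, all receive one common color. Fixing a solution tuple $(x_1, \ldots, x_n)$ with value-set $A$, I would perturb the first coordinate to $x_1 + \lambda_1 d$ with $\lambda_1 = -B\,\mathrm{sign}(a_1)$ (so $|\lambda_1| = |B| \leq M$) and set $v = |a_1| d$. A direct check, using $\sum a_i x_i = 0$, gives $a_1(x_1 + \lambda_1 d) + a_2 x_2 + \cdots + a_n x_n + Bv = (a_1 \lambda_1 + B|a_1|)d = 0$. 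All of $x_1 + \lambda_1 d, x_2, \ldots, x_n, v$ are positive integers of the same color by the corollary, so this is a monochromatic solution.

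The main obstacle is conceptual rather than computational: absorbing the extra term forces one to move off the original solution by an amount proportional to $d$ and to introduce the value $v = |a_1|d$, and there is no a priori reason that these perturbed values share the color of the solution. The role of Corollary \ref{cor:progd}, and underneath it van der Waerden's theorem through Lemma \ref{lem:manyprogs}, is exactly to upgrade ``a monochromatic solution exists'' to ``a monochromatic solution exists together with a controlled arithmetic-progression neighborhood and a prescribed multiple $qd$ of its common difference, all in one color.'' Verifying that the family of solution sets is homogeneous and $r$-regular, so that this corollary is available, is the key step; the remainder is bookkeeping.
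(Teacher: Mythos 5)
Your proposal is correct and follows essentially the same route as the paper: apply Corollary \ref{cor:progd} to the homogeneous, $r$-regular family of solution sets of the original equation, assign the new variables a fixed multiple of $d$, and absorb the extra term by perturbing one coordinate within the monochromatic progression. Your preliminary steps (clearing denominators so the perturbation $\lambda_1$ is an integer, and using $|a_1|$ rather than $a_1$ so the new variables are positive) are refinements the paper glosses over, but the underlying argument is the same.
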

\begin{proof}
Given any $r$-coloring of the integers, we can find $(x_1, \ldots, x_n)$ of the same color which satisfy $\sum_{i = 0}^n a_ix_i = 0$.  By Corollary $\ref{cor:progd}$, for any positive integer $M$, there are $y_1, \ldots, y_n, d > 0$ where $\sum_{i = 0}^n a_iy_i = 0$ and such that all of:
\begin{equation}
y_i + \lambda d \quad : \quad |\lambda| \leq M\nonumber
\end{equation}
and 
\begin{equation}
a_1d
\end{equation}
have the same color. We now consider the monochromatic parametrization:
\begin{eqnarray}
x_1 &=& y_1 + \lambda_1 d\nonumber\\
&\vdots& \nonumber\\
x_{n} &=& y_n + \lambda_nd\nonumber\\
x_{n+1} &=&a_1d\nonumber\\
&\vdots&\nonumber\\
x_{n+k} &=& a_1d\nonumber,
\end{eqnarray}
which is a solution to $(\ref{eq:aandb})$ if and only if:
\begin{equation}
a_1\lambda_1 + \cdots + a_n\lambda_n + (b_1 + \cdots + b_k)a_1 = 0\nonumber.
\end{equation}
The above equation is satisfied if we let $\lambda_1 = -(b_1 + \cdots + b_k)$, $\lambda_2 = \cdots = \lambda_n = 0$.
\end{proof}

We next prove that given any linear homogeneous equation $E$ of $n$ variables, for any $(n-1)$-coloring of the positive integers, there exist monochromatic integers which satisfy $E$ with at most one of the signs of the coefficients changed. This is equivalent to stating that for an $(n-1)$-coloring of the positive integers, a monochromatic solution will always be found on one of the $n+1$ hyperplanes defined by changing exactly 1 or 0 signs of the coefficients of $E$.

\begin{theorem}
\label{thm:hyperplanes}
Suppose we are given the equation $\sum_{i = 1}^n a_ix_i = 0$. Then for any $(n-1)$-coloring of the positive integers, there exist $x_1, \ldots, x_n$ and a function $f: \{1, 2, \ldots, n\} \rightarrow \{-1, 1\}$, where there is at most one $i$ such that $f(i) = -1$, such that
\begin{equation}
\label{eq:signschanged}
f(1)a_1x_1 + \cdots + f(n)a_nx_n = 0.
\end{equation}
\end{theorem}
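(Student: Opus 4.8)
The plan is to follow the blueprint of the proof of Theorem \ref{thm:main} as closely as possible, letting the single sign flip play the role that the engineered powers of two played there. First I would invoke the pigeonhole observation that drives every argument in this paper: among the $n$ integers $2^0, 2^1, \ldots, 2^{n-1}$ some two share a color under an $(n-1)$-coloring, so there are $x$ and an exponent $j$ with $1 \le j \le n-1$ for which $c(x) = c(2^j x)$. As in Section \ref{sec:mainthm} this makes the homogeneous family $S = \{(2^j b, b) : 1 \le j \le n-1\}$ an $(n-1)$-regular family, so Corollary \ref{cor:progd} applies: choosing the parameters $q$ and $M$ once the $a_i$ are known, I obtain $(a,b) \in S$ with $a = 2^j b$ and a step $d > 0$ such that the two progressions $b + \lambda d$ and $2^j b + \lambda d$ (for $|\lambda| \le M$) together with the value $qd$ are all the same color.

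Next I would write down a parametrization of $(x_1, \ldots, x_n)$ of the same shape as $(\ref{eq:xiparam})$: most variables are set to the filler value $qd$, one distinguished variable is placed on the progression through $a = 2^j b$, and one on the progression through $b$, with at most one coefficient negated by the function $f$. Substituting such an assignment into $(\ref{eq:signschanged})$ and collecting terms splits the requirement into two independent constraints. The coefficient of the base quantity $b$ is a $\mathbb{Z}$-linear combination of the $f(i) a_i$ whose weights are $1$ or $2^j$ according to which progression each variable occupies, while the coefficient of the step $d$ depends only on the free offsets and on $q$. The step constraint is easy: it is a single linear Diophantine relation in the offsets, solvable exactly as in the main theorem. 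The whole game is therefore reduced to forcing the base coefficient to vanish while flipping at most one sign.

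The step I expect to be the main obstacle is precisely this base cancellation. In Theorem \ref{thm:main} it held automatically because the scaled variable had coefficient $2^{n-1-j}$ and the unscaled one had coefficient $-2^{n-1}$, so multiplying the first by $2^j$ reproduced the second for \emph{every} admissible $j$; for an arbitrary equation there is no such built-in matching, and here $j$ is handed to me adversarially by the coloring rather than chosen. The one sign flip is meant to supply the missing degree of freedom, but it is not obviously enough: for an equation like $x_1 + 3x_2 + 5x_3 = 0$ with $j = 2$ one checks that no distribution of the weights $1$ and $2^j$ and no single sign change makes the base combination vanish, even though a monochromatic solution to some sign-variant still exists. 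This tells me the base term cannot always be annihilated purely multiplicatively, so the genuine argument must also use the \emph{exact} relation $a = 2^j b$ to fold a nonzero base coefficient into the coefficient of the step $d$; I expect that arranging enough control over the divisibility of $d$ to carry out this absorption, in tandem with the choice of which variable to scale and which sign to flip, is the delicate heart of the proof.
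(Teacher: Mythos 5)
Your proposal has a genuine gap, and to your credit you located it yourself: with the powers-of-two family $S = \{(2^j b, b)\}$ the base terms need not cancel, and your example $x_1 + 3x_2 + 5x_3 = 0$ correctly shows that no placement of weights $1$ and $2^j$ plus a single sign flip will do it. Where the proposal goes wrong is in the opening move. The paper does \emph{not} reuse the powers-of-two family from Theorem \ref{thm:main}; it applies the pigeonhole principle to the $n$ integers $|a_1|, \ldots, |a_n|$ themselves, taking the homogeneous family
\begin{equation}
S = \{ (|a_i|k, |a_j|k) \ : \ k \in \mathbb{N},\ 1 \leq i < j \leq n\},\nonumber
\end{equation}
which is $(n-1)$-regular since two of the $n$ values $|a_1|, \ldots, |a_n|$ must share a color. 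Corollary \ref{cor:progd} then yields same-colored progressions through $|a_i|k$ and $|a_j|k$ together with $Pd$ where $P = |a_1\cdots a_n|$. The cancellation you were missing now comes for free by \emph{crossing} the assignments: set $x_i = |a_j|k + \lambda_1 d$ and $x_j = |a_i|k + \lambda_2 d$, so the base contribution is $f(i)a_i|a_j|k + a_j|a_i|k$. If $a_i$ and $a_j$ have opposite signs this vanishes with no flip; if they have the same sign, flipping $f(i) = -1$ makes it vanish. That is the entire role of the single sign change.

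Your fallback idea --- absorbing a nonvanishing base coefficient into the $d$-term via divisibility control --- is not how the argument goes and there is no evidence it can be made to work; the adversarial exponent $j$ is simply not present in the correct proof. Once the base terms cancel, the residual constraint is $f(i)a_i\lambda_1 + a_j\lambda_2 + P(\sum_{l \neq i,j} a_l) = 0$, solved by $\lambda_2 = 0$ and $\lambda_1 = -P(\sum_{l \neq i,j} a_l)/(f(i)a_i)$, which is an integer because $a_i$ divides $P$. So the structural lesson is that the family $S$ must be tailored to the equation's coefficients, not inherited from the $2^i$ construction.
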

\begin{proof}
We define the family of sets 
\begin{equation}
S : \{ (|a_i|k, |a_j|k) \quad : \quad k \in \mathbb{N}, \mbox{  }1 \leq i < j \leq n\}.\nonumber
\end{equation}
 $S$ is clearly homogeneous. We claim that $S$ is also $(n-1)$-regular. To show this, consider any $(n-1)$-coloring of the positive integers. Then, by the pigeonhole principle, of the $n$ integers $|a_1|, \ldots |a_n|$, two must have the same color. The resulting set is monochromatic and belongs to $S$. We define $P = |a_1 \cdots a_n|$.
 
 By Corollary $\ref{cor:progd}$, for any $(n-1)$-coloring of $\mathbb{N}$, and for any positive integer $M$, there exist $\{|a_i|k, |a_j|k\} \in S, d >0$, where all
\begin{eqnarray}
&|a_i|k + l_1d& \quad : \quad |l_1| \leq M \nonumber\\
&|a_j|k + l_2d& \quad : \quad |l_2| \leq M\nonumber\\
&Pd&\nonumber
\end{eqnarray}
have the same color. 

If $a_i$ and $a_j$ have opposite signs, then we define $f(1) = \cdots = f(n) = 1$. Otherwise, we define $f(1) = \cdots = f(i - 1) = f(i+1) = \cdots = f(n) = 1$, $f(i) = -1$.

We now proceed almost identically as to in Theorem $\ref{thm:main}$. We will prove that $(\ref{eq:signschanged})$ has a monochromatic solution. We consider the following monochromatic parametrization:
\begin{eqnarray}
x_1 &=& Pd\nonumber\\
x_2 &=& Pd\nonumber\\
&\vdots& \nonumber\\
x_{i} &=& |a_j|k + \lambda_1d\nonumber\\
&\vdots&\nonumber\\
x_{j} &=& |a_i|k + \lambda_2d\nonumber\\
&\vdots&\nonumber\\
x_n &=& Pd\nonumber,
\end{eqnarray}
The above $x_i$ satisfy $(\ref{eq:signschanged})$ if and only if 
\begin{equation}
f(i)a_i\lambda_1 + a_j\lambda_2 + P(a_1 + \cdots + a_{i-1} + a_{i+1} + \cdots + a_{j-1} + a_{j+1} + \cdots + a_n) = 0\nonumber.
\end{equation}
We now choose $\lambda_2 = 0$, 
\begin{equation}
\lambda_1 = - \frac{P(a_1 + \cdots + a_{i-1} + a_{i+1} + \cdots + a_{j-1} + a_{j+1} + \cdots + a_n)}{f(i)a_i},\nonumber
\end{equation}
which is an integer, by the definition of $P$.
\end{proof}

\section{Conclusion}
Both Theorem \ref{thm:main} and the result of Alexeev and Tsimerman \cite{alexeev_equations_2010} show that certain equations of $n$ variables are $(n-1)$-regular. Rado \cite{rado_studien_1933} conjectured that the degree of regularity of any linear homogeneous equation with $n$ variables which is not regular is bounded above by some function of $n$. Fox and Kleitman proved this conjecture for $n = 3$. \cite{fox_rados_2006} It seems that there is also a nontrivial lower bound on the degree of regularity for most linear homogeneous equations:
\begin{conjecture}
\label{conj:final}
For each positive integer $r$ there is an integer $n(r)$ such that for any $n \geq n(r)$, any linear homogeneous equation in $n$ variables with nonzero integer coefficients not all of the same sign is $r$-regular.
\end{conjecture}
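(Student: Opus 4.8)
The plan is to exploit the monotonicity already established in this section: the first extension theorem says that adjoining variables cannot lower the degree of regularity, so the degree of regularity of an equation is at least that of any sub-equation obtained by deleting some of its terms. Consequently, to prove the conjecture it suffices to show that for $n \ge n(r)$, \emph{every} vector of $n$ nonzero integer coefficients that are not all of one sign admits a sub-collection whose equation is $r$-regular. This reduction is attractive because it decouples the problem from the coloring entirely and turns it into a combinatorial, number-theoretic question about multisets of integers, to which the coloring machinery of Section \ref{sec:lemmas} is then applied only on the chosen sub-collection.

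With this reduction in hand I would split into two regimes. By Rado's criterion, a sub-equation is regular --- hence $r$-regular for every $r$ --- as soon as some nonempty sub-collection of its coefficients sums to zero, so the first thing I would attempt is to prove that a sufficiently long sequence of nonzero integers containing both signs must contain a nonempty zero-sum sub-collection; wherever that succeeds we are immediately done, uniformly in $r$. This cannot hold in full generality (for instance $n-1$ copies of $1$ together with a single coefficient $-M$ with $M > n-1$ has no zero-sum subset), so the genuine content of the conjecture lives in the complementary regime of coefficient vectors with no zero-sum subset, which are exactly the Rado-irregular equations of finite degree of regularity.

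In that regime the plan is to locate, inside the coefficient multiset, a sub-equation that is at least as regular as one of the known high-regularity families --- the powers-of-two equation of Theorem \ref{thm:main} or the Alexeev--Tsimerman equation --- and to certify $r$-regularity by the template used throughout the paper. Concretely, I would build a homogeneous family $S$ of dilate-tuples of the active variables, verify its $r$-regularity by pigeonhole on the coefficient values (more than $r$ of any $n$ values must repeat a color, and Lemma \ref{lem:manyprogs} upgrades repeated colors into genuine monochromatic progressions), invoke Corollary \ref{cor:progd} to obtain a monochromatic window $b + \lambda d$ together with a control term such as $Pd$ with $P = \prod_i |a_i|$, and finally solve the single linear Diophantine relation the equation imposes on the free parameters $\lambda_i$ with all $|\lambda_i| \le M$, pouring the required balance onto one positive and one negative coefficient exactly as in the parametrization of Theorem \ref{thm:hyperplanes}. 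The hypothesis that the coefficients are not all of the same sign is used precisely here, to make the right-hand side of that Diophantine relation attainable.

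The hard part will be the irregular regime. Arbitrary coefficients are incommensurable, so there is no reason a generic large multiset should contain a recognizable powers-of-two or Alexeev--Tsimerman sub-pattern, and the single free relation produced by Corollary \ref{cor:progd} need not be solvable within a bounded window for every coefficient configuration. The real obstacle is therefore reconciling two competing demands at once: the $r$-regularity of $S$ wants many simultaneous color-coincidences, which forces $n(r)$ to be large and, via repeated use of Corollary \ref{cor:progd}, supplies more independent free parameters as $n$ grows; while exact integer solvability in a bounded window constrains the coefficient arithmetic in a way that pigeonhole over merely $r$ colors does not obviously control. Making these compatible \emph{uniformly} over all admissible coefficient vectors --- that is, showing that enough free parameters always accumulate to balance an arbitrary coefficient pattern within a bounded range of $\lambda_i$ --- is exactly where I expect the threshold $n(r)$ to be forced to grow, and is where the difficulty of the conjecture is concentrated.
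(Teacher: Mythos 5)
The statement you have been asked to prove is Conjecture \ref{conj:final}, which the paper itself poses as an open problem in its concluding section and does not prove; so the only question is whether your proposal actually closes the conjecture, and it does not. Your two preliminary reductions are sound: by the paper's first extension theorem ($r$-regularity is preserved under adjoining terms with arbitrary rational coefficients), it suffices to exhibit an $r$-regular sub-equation, and by Rado's criterion any coefficient multiset containing a nonempty zero-sum subset is disposed of for free, uniformly in $r$. Your counterexample ($n-1$ copies of $1$ together with a single $-M$, $M>n-1$) correctly shows that the zero-sum case does not exhaust the problem.

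The gap is the entire remaining regime, and you acknowledge as much. The paper's template --- a homogeneous family certified $r$-regular by pigeonhole, Corollary \ref{cor:progd}, and a parametrization with one or two free parameters $\lambda_i$ confined to a window $|\lambda_i|\le M$ --- reduces everything to a single linear relation $\sum_i a_i\lambda_i = N$, and that relation is solvable in the window only when the target $N$ is divisible by the relevant gcd and comparable in size to the active coefficients. In Theorem \ref{thm:main} this is arranged by the exact power-of-two divisibilities ($2^{n-1-j}$ divides every term), and in Theorem \ref{thm:hyperplanes} by taking $P=|a_1\cdots a_n|$ as the control term; for an arbitrary coefficient vector with no zero-sum subset --- already your own example with $M$ astronomically larger than $n$ and than any fixed window --- no sub-collection need carry such structure, and pigeonhole over $r$ colors gives no control whatsoever over the coefficient arithmetic. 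Your proposal supplies no mechanism for producing a monochromatic solution in that case, so the essential content of the conjecture is untouched; what you have written is a correct framing of where the difficulty lies, not a proof.
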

The requirement that the linear homogeneous equation have coefficients not all of the same sign is necessary even if we allow solutions to belong to the nonzero integers. For instance, given the equation $x_1 + \cdots + x_n = 0$, we can color the positive integers blue and the negative integers red to exclude monochromatic solutions, meaning that the equation's degree of regularity is 1 for any $n$.

\section{Acknowledgements}
The author would like to thank L\' aszl\' o Mikl\' os Lov\' asz for his suggestions on the paper and helpful conversations, and Professor Jacob Fox for introducing him to this field and for helpful discussions. 
Also, the author would like to acknowledge the MIT-PRIMES program for providing the opportunity to perform this research.



\bibliographystyle{ieeetr}
\bibliography{arxiv_note_library2.bib}
\end{document}